\theoremstyle{plain}
\newtheorem{theorem}{Theorem}[section]
\newtheorem{lemma}[theorem]{Lemma}
\theoremstyle{definition}
\newtheorem{definition}[theorem]{Definition}
\newcommand\Erdos{Erd\H{o}s }
\newcommand\Rodl{R\"odl }
\providecommand{\abs}[1]{\left\vert#1\right\vert}
\newcommand{\defeq}{\vcentcolon=}
\newcommand{\ind}[1]{^{(#1)}}
\renewcommand{\Pr}{\mathop{\bf Pr\/}}
\newcommand{\E}{\mathop{\bf E\/}}
\newcommand{\Var}{\mathop{\bf Var\/}}
\begin{document}

\title{On Ramsey numbers of hedgehogs}
\author{Jacob Fox\thanks{Department of Mathematics, Stanford University, Stanford, CA 94305. Email: {\tt jacobfox@stanford.edu}. Research supported by a Packard Fellowship, and by NSF Career Award DMS-1352121.}\,\, and Ray Li\thanks{Department of Computer Science, Stanford University, Stanford, CA 94305. Email: {\tt rayyli@cs.stanford.edu}.  Research supported by the National Science Foundation Graduate Research Fellowship Program under Grant No. DGE-1656518.}}
\date{\today}
\maketitle

\begin{abstract}
  The hedgehog $H_t$ is a 3-uniform hypergraph on vertices $1,\dots,t+\binom{t}{2}$ such that, for any pair $(i,j)$ with $1\le i<j\le t$, there exists a unique vertex $k>t$ such that $\{i,j,k\}$ is an edge.
  Conlon, Fox, and \Rodl proved that the two-color Ramsey number of the hedgehog grows polynomially in the number of its vertices, while the four-color Ramsey number grows exponentially in the number of its vertices. 
  They asked whether the two-color Ramsey number of the hedgehog $H_t$ is  nearly linear in the number of its vertices.
  We answer this question affirmatively, proving that $r(H_t) = O(t^2\ln t)$.
\end{abstract}


\section{Introduction}

For a $k$-uniform hypergraph $H$, the Ramsey number $r(H)$ is the smallest $n$ such that any 2-coloring of $K_n\ind{k}$, the complete $k$-uniform hypergraph on $n$ vertices, contains a monochromatic copy of $H$.
Let $r(H; q)$ denote the analogous Ramsey number for $q$-colorings, so that $r(H) = r(H; 2)$.

It is a major open problem to determine the growth of $r(K_t\ind{3})$, the Ramsey number of the complete 3-uniform hypergraph on $t$ vertices. It is known \cite{EHR65, ER52} that there are constants $c,c'>0$ such that
\begin{align}
  2^{ct^2} \le r(K_t\ind{3}) \le 2^{2^{c't}}.
\label{}
\end{align}
\Erdos conjectured that $r(K_t\ind{3})=2^{2^{\Theta(t)}}$, i.e. the upper bound is closer to the truth. 
\Erdos and Hajnal gave some evidence that this conjecture is true by showing that $ r_3(K_t\ind{3}; 4) \ge 2^{2^{ct}}$, i.e. the four color Ramsey number of $K_t\ind{3}$ is double-exponential in $t$ (see, for example \cite{GRS90}).

\begin{definition}
  The \emph{hedgehog} $H_t$ is a 3-uniform hypergraph on $t+\binom{t}{2}$ vertices $1,\dots,t+\binom{t}{2}$ such that, for each $1\le i<j\le t$, there exists a unique vertex $k>t$ such that $\{i,j,k\}$ is an edge, and there are no additional edges.
\end{definition}
We sometimes refer to the first $t$ vertices as the \emph{body} of the hedgehog.
For any $k\ge 4$, one can also define a $k$-uniform hedgehog $H_t\ind{k}$ on $t+\binom{t}{k-1}$, with a body of size $t$ and a unique hyperedge for every $k-1$-sized subset of the body.
In this notation, we have $H_t=H_t\ind{3}$.

Hedgehogs are interesting because their 2-color Ramsey number $r(H_t; 2)$ is polynomial in $t$, while their 4-color Ramsey number $r(H_t; 4)$ is exponentially large in $t$ \cite{KR06, CFR15}.
This suggests that the bound $r(K_t\ind{3}; 4)\ge 2^{2^{ct}}$ by \Erdos and Hajnal may not be such strong evidence that $r(K_t\ind{3})=2^{2^{\Theta(t)}}$.

Hedgehogs are also interesting because they are a natural family of hypergraphs with \emph{degeneracy} 1. 
Degeneracy is a notion of sparseness for graphs and hypergraphs. 
For graphs, the degeneracy is defined as the minimum $d$ such that every subgraph induced by a set of vertices has a vertex of degree at most $d$. 
The Burr-\Erdos conjecture \cite{BE75} states that there exists a constant $c(d)$ depending only on $d$ such that the Ramsey number of any $d$-degenerate graph $G$ on $n$ vertices satisfies $r(G) \le c(d)\cdot n$.
Building on the work of Kostochka and Sudakov \cite{KS03} and Fox and Sudakov \cite{FS09}, Lee \cite{L15} recently proved this conjecture.
We can similarly define the degeneracy of a hypergraph as the minimum $d$ such that every subhypergraph induced by a subset of vertices has a vertex of degree at most $d$.
Under this definition, Conlon, Fox, and \Rodl \cite{CFR15} observe that the 4-uniform analogue of the Burr-\Erdos conjecture is false: the 4-uniform hedgehog $H\ind{4}_t$, which is 1-degenerate, satisfies $r(H\ind{4}_t)\ge 2^{ct}$.
They also observe that the 3-uniform analogue of the Burr-\Erdos conjecture is false for 3 or more colors: the 3-uniform hedgehog, which is 1-degenerate, satisfies $r(H_t; 3) \ge \Omega(t^3/\log^6t)$.

However, the analogue of the Burr-\Erdos conjecture for 3-uniform hypergraphs and 2 colors remains open.
In particular, it was not known whether the Ramsey number of the hedgehog $H_t$ is linear, or even near-linear, in the number of vertices, $t+\binom{t}{2}$.
Conlon, Fox and Rodl \cite{CFR15} show $r(H_t; 2)\le 4t^3$, and, with the above in mind, ask if $r(H_t; 2) = t^{2+o(1)}$.
We answer this question affirmatively.
\begin{theorem}
\label{thm:main}
If $t\ge 10$ and  $n\ge 200t^2\ln t + 400t^2$, then every two-coloring of the complete $3$-uniform hypergraph on vertices contains a monochromatic copy of the hedgehog $H_t$. That is,
\begin{align*}
  r(H_t) < 200t^2\ln t + 400t^2+1.
\end{align*}
\end{theorem}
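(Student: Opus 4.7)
The plan is to use the probabilistic method, reducing the existence of a monochromatic hedgehog to Hall's marriage theorem. Without loss of generality, at least half of the triples are red (else swap colors), so the average pair red-degree $d_R(u,v) := |\{w : \{u,v,w\}\text{ is red}\}|$ is at least $(n-2)/2$. We aim to build a red copy of $H_t$: a body $S$ of size $t$ together with distinct spike vertices $s_{uv} \in V\setminus S$ indexed by $\binom{S}{2}$ such that each $\{u,v,s_{uv}\}$ is red.

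Given a candidate body $S$, finding such a spike assignment is a bipartite matching problem between $\binom{S}{2}$ and $V\setminus S$ whose edges encode red triples, and a system of distinct representatives exists precisely when Hall's condition holds. A convenient sufficient condition, obtained by a double-counting argument, is that every pair $(u,v) \in \binom{S}{2}$ satisfies $|R_{uv}\cap (V\setminus S)|\ge \binom{t}{2}$: each vertex can serve as a red spike for at most $\binom{t}{2}$ pairs in $\binom{S}{2}$, so for any $F \subseteq \binom{S}{2}$, the estimate $|N_R(F)|\cdot \binom{t}{2} \ge \sum_{p\in F} |R_p\cap (V\setminus S)| \ge |F|\binom{t}{2}$ gives $|N_R(F)|\ge |F|$. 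Hence it suffices to find $S$ of size $t$ in which every pair has $d_R(u,v) \ge \binom{t}{2}+t$.

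To find such $S$, the plan is to partition $V$ (either randomly or by averaging) into a body-candidate set and a spike-candidate set $T$ of carefully chosen sizes, and then build the body either greedily or by random selection from the body-candidate set so that each pair has many red neighbors inside $T$. Concentration inequalities of Chernoff type, applied to the hypergeometric counts of red spikes in $T$ for each pair, combined with a union bound over the $\binom{t}{2} \le t^2$ body pairs, produce the $\ln t$ factor in the bound on $n$: each pair's "bad event" must be pushed down to probability $o(1/t^2)$, which costs a logarithmic slack in $|T|$.

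The hard part is avoiding a Ramsey-type bottleneck in selecting~$S$. Requiring every pair in $S$ to have high red-degree in the entire vertex set corresponds to locating a large clique in an auxiliary "red-rich" graph, which in adversarial colorings can force $n$ to grow exponentially with $t$. We circumvent this by exploiting the flexibility to commit to either the red or the blue color globally together with the localized concentration obtained by restricting attention to the spike-candidate set $T$: even pairs whose red-degree in $V$ is not particularly large can still have many red spikes concentrated in $T$, and pairs where this fails for red will instead succeed for blue. Calibrating $|T|$ against the union-bound tolerance, and balancing it against the body-candidate reservoir so that the greedy construction does not exhaust its supply, produces the claimed bound $n \ge 200t^2 \ln t + 400 t^2$.
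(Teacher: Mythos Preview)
Your proposal correctly identifies the reduction to Hall's condition and correctly pinpoints the real obstacle: selecting a body $S$ of size $t$ in which every pair has high red-degree is a Ramsey-type clique problem. However, your proposed circumvention does not work. Passing to a random spike-set $T$ cannot rescue a pair whose global red-degree is small: if $d_R(u,v)$ is small in $V$, then by the very concentration you invoke, $|R_{uv}\cap T|$ is proportionally small, not large. Your sentence ``even pairs whose red-degree in $V$ is not particularly large can still have many red spikes concentrated in $T$'' is false for a random $T$. And the fallback ``pairs where this fails for red will instead succeed for blue'' returns you to the original bottleneck: you must still exhibit $t$ vertices whose $\binom{t}{2}$ pairs all succeed for the \emph{same} color, which is exactly the auxiliary-clique problem you just flagged as potentially exponential. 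The final paragraph is not an argument but a restatement of what would need to be true.

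The paper resolves this with a genuinely different mechanism. It runs an iterative \emph{peeling procedure}: repeatedly remove from a working set $X$ any vertex $v$ for which $|U^{(\chi)}_{\le m}(v,X)|\ge 10m$ at some threshold $m$, placing $v$ into a candidate body $S^{(\chi)}$ and charging fractional penalties $\alpha^{(\chi)},\beta^{(\chi)}$ to its heavy neighbors and to a set $B^{(\chi)}(v)$ of ``bad'' vertices. A penalty-accounting argument (Lemmas~\ref{lem:alg-7} and~\ref{lem:alg-3}) bounds the total number of deletions by $O(t^2\ln t)$; the $\ln t$ comes from a harmonic sum $\sum_m 1/m$ over thresholds, not from a union bound. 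If peeling accumulates $t$ body vertices, a careful Hall verification driven by the penalty bookkeeping yields a monochromatic hedgehog. If peeling stalls, then the surviving set $X$ is \emph{balanced} (no vertex has many heavy neighbors at any scale), and only in that regime does a random-sampling-plus-second-moment argument (Lemma~\ref{lem:bal}) of the flavor you sketched actually go through. Your proposal is, in effect, trying to run that last lemma on an arbitrary coloring, where it fails without the balancedness that the peeling step manufactures.
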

We make no attempt to optimize the absolute constants here.

\usetikzlibrary{positioning}

\section{Ramsey number of hedgehogs}

Throughout this section, we assume $t\ge 10$, and that we have a fixed two-coloring of the edges of a complete 3-uniform hypergraph $\mathcal{H}$ on vertex set $V$ with $n\ge 200t^2\ln t + 400t^2$ vertices.
Let
\begin{align}
  m_{max}\defeq 2t + \binom{t}{2}.
\end{align}
Let $\binom{S}{2}$ denote the set of pairs of elements of $S$.
For integer $a$, let $[a]=\{1,2,\dots,a\}$.
For vertices $u$ and $v$ of $\mathcal{H}$, we write $uv$ as an abbreviation for the unordered pair $\{u,v\}$.

For $u,v\in V$, let
\begin{align}
  d\ind{r}_{uv} \ &\defeq \   \abs{\{w:\{u,v,w\}\text{ red}\}} \nonumber\\
  d\ind{b}_{uv} \ &\defeq \   \abs{\{w:\{u,v,w\}\text{ blue}\}}.
\end{align}
For a set of pairs $F\subset \binom{V}{2}$, let
\begin{align}
  N\ind{b}(F) \ &\defeq \  \left\{ w:\exists uv\in F \text{ s.t. } \{u,v,w\} \text{ blue} \right\} \nonumber\\
  N\ind{r}(F) \ &\defeq \  \left\{ w:\exists uv\in F \text{ s.t. } \{u,v,w\} \text{ red} \right\}.
\label{}
\end{align}
Here, and throughout, we use $b$ and $r$ to refer to the colors blue and red, respectively.
For a vertex $v$ and set $X$, let
\begin{align}
  U\ind{b}_{\le m}(v, X) \ &= \   \left\{ u \in X: d\ind{r}_{uv} \le m \right\} \nonumber\\
  U\ind{r}_{\le m}(v, X) \ &= \   \left\{ u \in X: d\ind{b}_{uv} \le m \right\}.
\end{align}
If $X$ is omitted, take $X=V$.
We define $U\ind{b}_{\le m}(v,X)$ to be sets of $u$ such that $d\ind{r}_{uv}$ is small, rather than those such that $d\ind{b}_{uv}$ is small, because we wish to think of $U\ind{b}$'s as sets helpful for finding a blue hedgehog.
Similarly, we think of $U\ind{r}$'s as sets helpful for finding a red hedgehog.

\begin{lemma}
  \label{lem:alg-0}
  For any $0\le m< \frac{|V|}{2}-1$, and $v\in V$,
  \begin{align}
    \min\left( |U\ind{b}_{\le m}(v)|, |U\ind{r}_{\le m}(v)| \right) \le 2m.
  \label{}
  \end{align}
\end{lemma}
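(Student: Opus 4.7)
The plan is to prove the statement by contradiction: assume both $|U\ind{b}_{\le m}(v)|$ and $|U\ind{r}_{\le m}(v)|$ exceed $2m$, and derive a contradiction by double-counting triples containing $v$.

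I would first observe that the two sets can only share $v$. For any $u \ne v$, every third vertex $w \in V \setminus \{u,v\}$ makes $\{u,v,w\}$ exactly one color, so $d\ind{r}_{uv} + d\ind{b}_{uv} = |V| - 2 > 2m$ by the hypothesis $m < |V|/2 - 1$. Hence no $u \ne v$ can lie in both sets. Writing $A$ and $B$ for $U\ind{b}_{\le m}(v) \setminus \{v\}$ and $U\ind{r}_{\le m}(v) \setminus \{v\}$, we still have $|A|, |B| > 2m$ (the possible presence of $v$ in the original sets is a harmless off-by-one).

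The key step is a bipartite double-count. Consider the ordered pairs $(u_1, u_2) \in A \times B$; by the disjointness in the previous paragraph these correspond to genuine triples $\{u_1, u_2, v\}$. If $\{u_1, u_2, v\}$ is red, then $u_2$ is one of the at most $d\ind{r}_{u_1 v} \le m$ red completions of $u_1 v$ (using $u_1 \in A$), so summing over $u_1 \in A$ gives at most $m|A|$ red triples in this family. By the symmetric argument applied to $u_2 \in B$, there are at most $m|B|$ blue triples. Therefore
\[
|A|\cdot|B| \;\le\; m|A| + m|B|,
\]
which rearranges to $1 \le m/|A| + m/|B|$. With $|A|, |B| > 2m$ the right-hand side is strictly less than $1$, a contradiction.

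I do not expect a real obstacle here; the whole proof rests on identifying the right double-count, namely the bipartite family of ordered pairs formed by a ``red-leaning'' neighbor of $v$ (from $U\ind{b}_{\le m}(v)$) and a ``blue-leaning'' neighbor of $v$ (from $U\ind{r}_{\le m}(v)$), and classifying the completed triple by color. The thresholds defining the two sets are exactly what is needed to bound the red and blue contributions, and the hypothesis on $m$ is used only once, to force disjointness.
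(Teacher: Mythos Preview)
Your proof is correct and follows the same double-counting argument as the paper: assume both sets exceed $2m$, observe they are disjoint since $d\ind{r}_{uv}+d\ind{b}_{uv}=|V|-2>2m$, and bound the triples $\{u_1,u_2,v\}$ with $u_1\in A$, $u_2\in B$ by colour to obtain $|A|\,|B|\le m(|A|+|B|)$, contradicting $|A|,|B|>2m$. The paper simply takes $A=U\ind{b}_{\le m}(v)$ and $B=U\ind{r}_{\le m}(v)$ directly (implicitly with $u\ne v$), so your explicit excision of $v$ is the only cosmetic difference.
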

\begin{proof}
  Fix $m$ and $v$.
  For convenience, let $A=U\ind{b}_{\le m}(v)$ and $B=U\ind{r}_{\le m}(v)$.
  Assume for contradiction that $|A|, |B| \ge 2m + 1$.
  For every $u$, we have $d\ind{r}_{uv}+d\ind{b}_{uv} = |V|-2 > 2m$, so $A$ and $B$ are disjoint.
  Consider the set $E'$ of edges of $\mathcal{H}$ containing $v$, one element of $A$, and one element of $B$.
  On one hand, $|E'|=|A|\cdot |B|$.
  On the other hand, for every $u\in A$, the pair $uv$ is in at most $m$ such red triples, so the number of red triples of $E'$ is at most $|A|\cdot m$.
  Additionally, for every $u\in B$, the pair $uv$ is in at most $m$ such blue triples, so the number of blue triples of $E'$ is at most $|B|\cdot m$.
  Hence, $(|A|+|B|)\cdot m \le |E'| = |A|\cdot |B|$, a contradiction of $|A|,|B|\ge 2m+1$.
\end{proof}

The following ``matching condition'' for hedgehogs is useful.
\begin{lemma}
  \label{lem:hall}
  Let $S\subset V$ be a set of $t$ vertices.
  If, for all nonempty sets $F\subset \binom{S}{2}$, we have $|N\ind{b}(F)|\ge |F| + t$, then there exists a blue hedgehog with body $S$.
  Similarly, if, for all nonempty sets $F\subset\binom{S}{2}$, we have $|N\ind{r}(F)|\ge |F| + t$, then there exists a red hedgehog with body $S$.
\end{lemma}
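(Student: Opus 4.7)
The plan is a direct application of Hall's marriage theorem, with the ``$+t$'' slack in the hypothesis absorbing the fact that the extra vertices of the hedgehog must lie outside the body $S$. Specifically, build an auxiliary bipartite graph $G$ whose left part is $\binom{S}{2}$ and whose right part is $V\setminus S$, and place an edge between $\{u,v\}\in\binom{S}{2}$ and $w\in V\setminus S$ exactly when the triple $\{u,v,w\}$ is blue. A left-saturating matching in $G$ assigns to each pair $\{i,j\}\subset S$ a distinct vertex $k\in V\setminus S$ with $\{i,j,k\}$ blue, which is exactly a blue copy of $H_t$ with body $S$.

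To check Hall's condition, note that for any nonempty $F\subset\binom{S}{2}$ the $G$-neighborhood of $F$ equals $N\ind{b}(F)\setminus S$. Using the hypothesis and $|S|=t$,
\[
  |N_G(F)| \;\ge\; |N\ind{b}(F)| - |S| \;\ge\; (|F|+t) - t \;=\; |F|.
\]
The case $F=\emptyset$ is trivial, so Hall's theorem provides a matching that saturates $\binom{S}{2}$, giving the desired blue hedgehog. The red case is identical after swapping the roles of $b$ and $r$.

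I expect no real obstacle here: the content of the lemma is essentially the observation that the ``$+t$'' buffer in the hypothesis is calibrated precisely to discard the $t$ potential ``neighbors'' that lie inside $S$ itself and are therefore ineligible to serve as the unique vertex $k>t$ attached to a body pair. Everything else is Hall.
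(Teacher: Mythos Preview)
Your proof is correct and matches the paper's argument essentially line for line: both set up the bipartite graph between $\binom{S}{2}$ and $V\setminus S$, observe that $|N_G(F)|\ge |N\ind{b}(F)|-|S|\ge |F|$, and apply Hall's theorem. Nothing to add.
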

\begin{proof}
  By symmetry, it suffices to prove the first part.
  Consider the bipartite graph $G$ between pairs in $\binom{S}{2}$ and vertices of $V\setminus S$, where $uv\in \binom{S}{2}$ is connected with $w\in V\setminus S$ if and only if triple $\{u,v,w\}$ is blue.
  If, for all nonempty $F \subset\binom{S}{2}$, we have $|N\ind{b}(F)|\ge |F|+t$, then any such $F$ has at least $|F|+t - |S| = |F|$ neighbors in $G$.
  By Hall's marriage lemma on $G$, there exists a matching in $G$ using every element of $\binom{S}{2}$.
  Taking triples $\{u,v,w\}$ where $uv\in\binom{S}{2}$ and $w\in V\setminus S$ is the vertex matched with pair $uv$ gives a blue hedgehog with body $S$.
\end{proof}

\subsection{Special Cases}

We start by finding monochromatic hedgehogs in two specific classes of colorings on $\mathcal{H}$.
We base our proof of Theorem~\ref{thm:main} on the argument for the first class of colorings, which we call \emph{simple colorings}.
We use the result for the second class of colorings, which we call \emph{balanced colorings}, as a specific case in the general argument.

\subsubsection{Simple colorings}
Consider hypergraphs that are colored the following way:
\begin{enumerate}
\item Start with a graph $G$ on $[n]$.
\item Color a complete hypergraph $\mathcal{H}$ on $[n]$ by coloring the triple $\{u,v,w\}$ blue if at least one of $uv,uw,vw$ is in $G$, and red otherwise.
\end{enumerate}
\begin{lemma}
  If $n\ge t^2 + t$, any hypergraph colored as above has a monochromatic $H_t$.
\label{lem:motiv-1}
\end{lemma}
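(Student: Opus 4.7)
The plan is to case-split on the independence number $\alpha(G)$ of the underlying graph $G$. Let $s=\binom{t}{2}+t$ be the number of vertices of $H_t$, so the hypothesis $n\ge t^2+t$ reads $n\ge 2s$.

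\emph{Case 1 ($\alpha(G)\ge s$).} I would find a red $H_t$ entirely inside a maximum independent set $I$. Choose any $t$ vertices of $I$ as the body $S$ and assign the $\binom{t}{2}$ remaining vertices of $I\setminus S$ as helpers, one per pair. Since every pair of $I$ is a non-edge of $G$, every triple in $I$ is an independent triple and hence red, so each assigned triple $\{v_i,v_j,w_{ij}\}$ is red, giving a red $H_t$.

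\emph{Case 2 ($\alpha(G)<s$).} I would further split on the maximum $G$-degree $\Delta$. If $\Delta\le t-2$, the Caro--Wei bound gives $\alpha(G)\ge n/(\Delta+1)\ge (t^2+t)/(t-1)>t$, so a $t$-vertex $G$-independent body $S$ exists. For each pair $(u,v)\in\binom{S}{2}$, the number of common $G$-non-neighbors outside $S$ is at least $(n-t)-2\Delta\ge n-3t+4\ge\binom{t}{2}$ using $n\ge t^2+t$, so for every nonempty $F\subseteq\binom{S}{2}$ we have $|N^{(r)}(F)\cap(V\setminus S)|\ge\binom{t}{2}\ge|F|$; Hall's condition holds trivially and we obtain a red $H_t$ as in Lemma~\ref{lem:hall}.

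If instead $\Delta\ge t-1$, let $v^*$ be a vertex of maximum $G$-degree. Now I would aim for a blue $H_t$ via Lemma~\ref{lem:hall} using a body that contains $v^*$. A natural route is induction on $t$: assuming the lemma for $t-1$, apply it to the residual set $W=V\setminus\{v^*\}\setminus N_G(v^*)$, which has $|W|=n-1-\Delta\ge (t-1)^2+(t-1)$ whenever $\Delta\le 2t-1$, to obtain a monochromatic $H_{t-1}$ in $W$. If this inner $H_{t-1}$ is blue, extending to $H_t$ with body $S'\cup\{v^*\}$ is clean: for each of the $t-1$ new pairs $(u_i,v^*)$, picking a helper $w$ from $N_G(v^*)$ makes $\{u_i,v^*,w\}$ blue because $v^*w\in G$, and such helpers are plentiful and automatically disjoint from the existing helpers in $W$.

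I expect the main obstacle to be this high-degree sub-case, specifically (a) the regime $\Delta\ge 2t$ where $W$ is too small for the straightforward induction and one must instead work inside $N_G(v^*)$ or use a more global argument exploiting $\alpha(G)<n/2$, and (b) extending a \emph{red} inner $H_{t-1}$ rather than a blue one, since for each new pair $(u_i,v^*)$ the helper must now avoid both $N_G(v^*)$ and $N_G(u_i)$. Resolving (b) likely requires using the fact that $u_i\in W$ implies $u_iv^*\notin G$ together with a careful Hall argument balancing the degrees $d(u_i)$ against the size of $W$.
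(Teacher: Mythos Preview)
Your Case~1 and the low-degree sub-case of Case~2 are fine. The gap is exactly where you flag it, and both obstacles are real rather than merely technical. For (b): if the inductive copy in $W$ is red, its body $S'$ is $G$-independent, and to extend you must adjoin some $v_t$ non-adjacent to every vertex of $S'$ and then find $t-1$ fresh common non-neighbors as helpers. Nothing in your hypotheses forces such a $v_t$ to exist --- vertices of $W\setminus S'$ may each have an edge into $S'$, and vertices of $N_G(v^*)$ carry no information about their adjacencies to $S'$. For (a): working inside $N_G(v^*)$ only handles the $t-1$ pairs through $v^*$; the $\binom{t-1}{2}$ pairs among the other body vertices still need a helper $G$-adjacent to one endpoint, which recreates the original problem on $N_G(v^*)$ without a usable size reduction. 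The bound $\alpha(G)<n/2$ you mention does not obviously help either, since it gives no control on degrees inside any particular subset.

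The paper sidesteps both issues with a single peeling argument that replaces your dichotomy on $\Delta$. Starting from $X=V$, for $i=t-1,t-2,\dots,0$ it picks some $v_i\in X$ with at least $i$ neighbors in $X$, sets aside $i$ such neighbors as $\hat U(v_i)$, and deletes $\{v_i\}\cup\hat U(v_i)$ from $X$. If all $t$ steps succeed, $\{v_0,\dots,v_{t-1}\}$ is the body of a blue $H_t$: the pair $v_iv_j$ with $i<j$ takes an unused helper from $\hat U(v_j)$ (there are exactly $j$ such pairs and $|\hat U(v_j)|=j$), and the triple is blue since $v_j$ is $G$-adjacent to every vertex of $\hat U(v_j)$. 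If the process fails at step $i$, then $G[X]$ has maximum degree at most $i-1$ while $|X|\ge\binom{t+1}{2}+\binom{i+2}{2}>ti$; this is exactly your low-degree sub-case with threshold $i$ in place of $t-1$, and an independent $t$-set in $X$ hosts a red $H_t$ greedily. The peeling thus interpolates smoothly between ``enough high-degree vertices'' (blue) and ``uniformly low degree in what remains'' (red), so there is never a wrong-colored sub-hedgehog to extend and no separate regime $\Delta\ge 2t$ to handle.
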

\begin{proof}
  Set $X=V(G)$.
  For $i=t-1,t-2,\dots,0$, pick a vertex $v_i\in X$ whose degree in $G$ is at least $i$ and let $\hat U(v_i) \subset X$ be an arbitrary set of $i$ neighbors of $v_i$.
  Remove $v_i\cup \hat U(v_i)$ from $X$.
  We call this the \emph{peeling step} of $v_i$.
  Figure~\ref{fig:1} shows the first three peeling steps of this process for $t=5$.
  If this process succeeds, we have found a set $S=\{v_{t-1},\dots,v_0\}$ of $t$ vertices and disjoint sets of vertices $\hat U(v_0),\dots, \hat U(v_{t-1})$ also disjoint from $S$, from which we can greedily embed a blue-hedgehog in $\mathcal{H}$ with body $\{v_0,\dots,v_{t-1}\}$: for each $v_iv_j$ with $i < j$, pick an arbitrary unused element of $\hat U(v_j)$ for the third vertex of the hedgehog's edge containing $v_iv_j$.

  Now suppose this process finds vertices $v_{t-1},v_{t-2},\dots,v_{i+1}$ but fails to find $v_i$ for some $i\le t-1$.
  After picking $v_j$, we remove $v_j$ and $j$ of it's neighbors from $X$, for a total of $j+1$ vertices.
  Then we have removed exactly $t+(t-1)+\cdots+(i+2) = \binom{t+1}{2} - \binom{i+2}{2}$ vertices from $X$.
  Hence, $|X|\ge (t^2+t) - \binom{t+1}{2} + \binom{i+2}{2} = \binom{t+1}{2} + \binom{i+2}{2} > \frac{t^2+i^2}{2} \ge ti$, and every vertex has degree at most $i-1$ in the subgraph of $G$ induced $X$.
  Thus, there exists an independent set $S\subset X$ in $G$ of size at least $|X|/i\ge t$.
  Furthermore, any vertex has at most $i-1$ neighbors in $X$, so any two vertices $u,v\in S$ share at least $|X|-2i\ge t+\binom{t}{2} + \binom{i+2}{2}-2i > t + \binom{t}{2}$ red triples in the subhypergraph of $\mathcal{H}$ induced by $X$, so we can greedily find a red hedgehog with body $S$.
\end{proof}
  \begin{figure}
    \label{fig:1}
    \centerline{
      \includegraphics[height=150px]{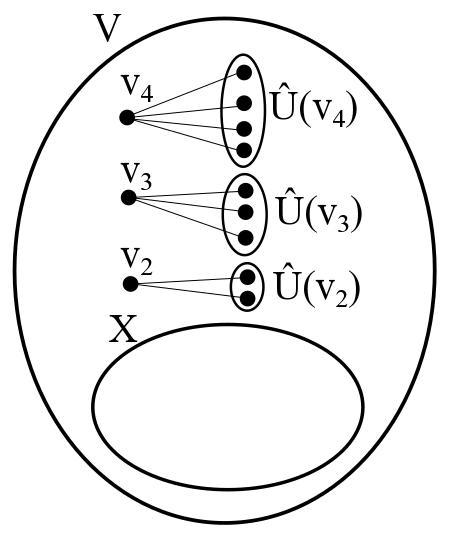}
    }
    \caption{Peeling $v_4, v_3, v_2$ in Lemma~\ref{lem:motiv-1}}
  \end{figure}

\subsubsection{Balanced colorings}

In this section, we consider the case where our coloring is ``balanced''. 
Lemma~\ref{lem:alg-0} tells us that, for every vertex $v$ and every nonnegative integer $m$ less than $\frac{|V|}{2}-1$, one of $|U_{\le m}\ind{b}(v)| = \#\{u:d\ind{r}_{uv}\le m\}$ and $|U_{\le m}\ind{r}(v)|=\#\{u:d\ind{b}_{uv}\le m\}$ is at most $2m$.
In ``balanced'' colorings, we assume, for all $v\in V$ and all $2t\le m\le m_{max}\defeq 2t+\binom{t}{2}$, \emph{both} of $|U_{\le m}\ind{b}(v)|$ and $|U_{\le m}\ind{r}(v)|$ are $O(m)$.
We show, in this case, there is a monochromatic hedgehog.
The proof is by choosing a random subset of approximately $4t$ vertices, and showing that, with positive probability, we can remove vertices so that the remaining set of $t$ vertices is the body of some red hedgehog.
\begin{lemma}
  Let $c\ge 1$.
  Consider a two-colored hypergraph $\mathcal{H}=(V,E)$ on $n\ge 40ct^2$ vertices.
  Suppose that for all $2t\le m\le m_{max}$ and all $v\in V$, we have
  \begin{align}
    \abs{U\ind{b}_{\le m}(v)}\le cm.
  \label{eq:bal-0}
  \end{align}
  Then $\mathcal{H}$ has a red hedgehog $H_t$.
\label{lem:bal}
\end{lemma}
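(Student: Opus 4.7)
The plan is a probabilistic argument along the lines of the hint preceding the lemma: pick $S_0 \subseteq V$ uniformly at random of size $s$ approximately $4t$, then delete at most $3t$ vertices from $S_0$ to obtain a $t$-vertex subset $S$ that serves as the body of a red hedgehog by Lemma~\ref{lem:hall}. To invoke that lemma, it suffices to establish $|N\ind{r}(F)| \ge |F| + t$ for every nonempty $F \subseteq \binom{S}{2}$.

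The sufficient condition I would verify is a \emph{rank condition} on $S$: for every integer $m \ge 0$, $\pi_m(S) := |\{uv \in \binom{S}{2} : d\ind{r}_{uv} \le m\}| \le m - t$. Equivalently, ordering the pairs of $\binom{S}{2}$ in ascending order of $d\ind{r}$, the $i$-th satisfies $d\ind{r} \ge i + t$. This implies the hypothesis of Lemma~\ref{lem:hall} because $|N\ind{r}(F)| \ge \max_{uv \in F} d\ind{r}_{uv}$, and for any $F$ with $|F| = k$ the worst case is when $F$ consists of the $k$ pairs of smallest $d\ind{r}$ in $\binom{S}{2}$, giving $|N\ind{r}(F)|$ at least the $k$-th smallest, which the rank condition bounds below by $k + t$. (Equivalently, this is the precise condition under which a greedy matching of $\binom{S}{2}$ to $V \setminus S$ via red triples, processing pairs in ascending order of $d\ind{r}$, succeeds in producing the hedgehog.)

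Next I would bound the expected number of bad pairs in $S_0$. Writing $G_m$ for the graph on $V$ with edges $\{uv : d\ind{r}_{uv} \le m\}$, the hypothesis $|U\ind{b}_{\le m}(v)| \le cm$ for $m \in [2t, m_{max}]$, together with monotonicity giving $|U\ind{b}_{\le m}(v)| \le 2ct$ for $m < 2t$, yields $|E(G_m)| \le nc\max(m, 2t)/2$. For $S_0$ uniformly random of size $s = 4t$ in a vertex set of size $n \ge 40ct^2$, linearity of expectation gives
\[ \E[\pi_m(S_0)] \le c \max(m, 2t) \, s^2/(2n) \le \max(m, 2t)/5. \]
At ``high'' levels (roughly $m \gtrsim 5t/3$), this expectation is comfortably less than $m - t$, so the rank condition is inherited by $S \subseteq S_0$ with positive probability via Markov's inequality. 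The binding constraints come from the ``low'' range $m \in [t, 5t/3]$, where the budget $m - t$ can be as small as $0$ while $\E[\pi_m(S_0)]$ stays around $2t/5$.

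The main obstacle is thus the pruning step: showing that with positive probability there exists $T \subseteq S_0$ with $|T| \le 3t$ such that $\pi_m(S_0 \setminus T) \le m - t$ simultaneously for every $m$. At each critical low level, the violating pairs of $\binom{S_0}{2}$ form a graph with only $O(t)$ edges in expectation, so they are killable by a vertex cover of size $O(t)$; monotonicity of $\pi_m$ in $m$ ensures the same deletions also help at higher levels. The delicate part is to orchestrate the deletions across the $O(t)$ critical levels so that the total budget stays at $3t$ and every rank constraint holds. This can be done via Markov bounds (or a second-moment estimate) applied at a handful of well-chosen levels in $\{t, t+1, \dots, 2t\}$, or equivalently via an explicit greedy process that iteratively removes a vertex incident to the pair currently maximizing the excess $\pi_m(S) - (m - t)$. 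Once $T$ is constructed, one may shrink $S_0 \setminus T$ to exactly $t$ vertices by arbitrary further deletions (which monotonically preserve the rank condition), and Lemma~\ref{lem:hall} produces the required red hedgehog with body $S$.
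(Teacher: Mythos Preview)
Your plan is the paper's: sample $\sim 4t$ vertices, establish the rank condition $\pi_m \le m-t$, prune, and apply Lemma~\ref{lem:hall}. The gap is in your treatment of the ``high'' range $m\in[2t,m_{max}]$. You assert Markov handles these since $\E[\pi_m(S_0)]\le m/5 < m-t$, but Markov only yields a constant failure probability at each fixed level, and the rank condition must hold at \emph{every} $m$ up to $m_{max}=2t+\binom{t}{2}$; a union bound over even the $\sim t/2$ multiples of $t$ in that range already blows up. The paper commits to the second-moment route you mention parenthetically: using that the dependency graph on the pairs $\{uv:d\ind{r}_{uv}\le m\}$ has maximum degree $\le 2cm$ (by the hypothesis), one gets $\Var[\pi_m(S_0)]\le m^2/(25t)$, so Chebyshev gives $\Pr[\pi_m(S_0)>m-t]<16/(25t)$, and now the union bound over the fewer-than-$t$ multiples of $t$ in $[2t,m_{max}]$ succeeds.

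The pruning is also more concrete than your sketch. Once $\pi_m(S_0)\le m-t$ at all those multiples of $t$ and $|S_0|\ge 3t$, the paper deletes one endpoint from each of the $2t$ pairs of smallest $d\ind{r}$ in $\binom{S_0}{2}$ (plus arbitrary further vertices to reach size exactly $t$). This lowers each $\pi_m$ by at least $2t$, giving $\pi_m(T)\le\max(0,m-3t)$ at the multiples of $t$; rounding any $m$ up to the next multiple of $t$ then yields $\pi_m(T)\le m-2t$ for all $m\in[2t,m_{max}-t]$, and in particular $\pi_{2t}(T)=0$ disposes of the low range $m<2t$ at once. No separate analysis of levels in $\{t,\dots,2t\}$ is needed.
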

\begin{proof}
  It suffices to prove for $n=40ct^2$, so assume without loss of generality that $n=40ct^2$.
  Pick a random set $S$ by including each vertex of $V$ in $S$ independently with probability $4t/n$.
  By the Chernoff bound, $\Pr[|S|\le 3t] \le e^{-t/8}$. 

  Fix $m$ such that $2t\le m\le m_{max}$ and $m$ is a multiple of $t$.
  Let $e_1,\dots,e_p$ be the pairs such that $d\ind{b}_{e_\ell}\le m$ for all $\ell\in[p]$, and let $X_1,\dots,X_p$ the indicator random variables for these pairs being in $\binom{S}{2}$.
  Let $X=X_1+\cdots+X_p$.
  By \eqref{eq:bal-0}, we have $p\le cmn/2$.
  Each $X_\ell$ for $\ell\in[p]$ is a Bernoulli$(16t^2/n^2)$ random variable.
  Consider a graph on $[p]$ where $\ell$ and $\ell'$ are adjacent (written $\ell\sim \ell'$) if $e_\ell$ and $e_{\ell'}$ share a vertex.
  This is a valid dependency graph for $\{X_{\ell}\}$ as $X_{\ell}$ is independent of all $X_{\ell'}$ such that $e_{\ell'}$ is vertex disjoint from $e_{\ell}$.
  Furthermore, by the condition \eqref{eq:bal-0}, each endpoint of any pair $e_{\ell}$ is in at most $cm$ pairs, so each $\ell\in[p]$ has degree at most $2c m$ in the dependency graph, and the total number of pairs $(\ell,\ell')$ such that $\ell\sim \ell'$ is at most $2cmp$.
  We have
  \begin{align}
    \label{eq:bal-1-1}
    \E[X] \ &= \ \frac{16 t^2 p}{n^2} \ = \ \frac{2p}{5cn}  \ \le \ \frac{m}{5} \ < \ \frac{3m}{4} - t, \\
    \Var[X] 
    \ &= \ \sum_{\ell,\ell'\in[p]}^{} \E[X_{\ell}X_{\ell'}]-\E[X_{\ell}]\E[X_{\ell'}] \nonumber\\
    \ &= \  \sum_{\ell\sim \ell'}^{} \E[X_{\ell}X_{\ell'}] - \E[X_{\ell}]\E[X_{\ell'}] \nonumber\\
    \ &\le \ 2cmp\cdot \left( \left( \frac{4t}{n} \right)^3 - \left( \frac{4t}{n} \right)^4 \right) \nonumber\\ 
    \ &< \ \frac{128t^3c mp}{n^3}
    \ \le \ \frac{64t^3c^2m^2}{n^2} 
    \ = \ \frac{m^2}{25t}. 
    \label{eq:bal-1-2}
  \end{align}
  Hence,
  \begin{align}
    \Pr\left[\#\left\{uv\in \tbinom{S}{2}:d\ind{r}_{uv}\le m\right\}> m-t\right]
    \ &= \  \Pr[X > m-t] \nonumber\\
    \ &= \  \Pr[X-\E[X]\ge m-t-\E[X]] \nonumber\\
    \ &\le \  \Pr[X-\E[X]\ge m/4] \nonumber\\
    \ &\le \  \frac{\Var[X]}{(m/4)^2}
    \ < \ \frac{16}{25t}. 
  \label{}
  \end{align}
  The first inequality is by \eqref{eq:bal-1-1} and the second is by Chebyshev's inequality.
  By the union bound over the multiples of $t$ in $[2t,m_{max}]$, of which there are less than $t$, the probability there exists some $m\in[2t,m_{max}]$ a multiple of $t$ with
  \begin{align}
    \#\left\{uv\in \tbinom{S}{2}:d\ind{r}_{uv}\le m\right\}\le m-t
  \label{eq:bal-2.5}
  \end{align}
  is less than $t\cdot \frac{16}{25t} = \frac{16}{25}$.
  Again by the union bound, with probability more than $1-(\frac{16}{25}+e^{-t/8}) > 0$ over the randomness of $S$, we have (i) $|S|\ge 3t$, and (ii) for all $m$ a multiple of $t$ in $[2t,m_{max}]$, \eqref{eq:bal-2.5} holds.
  Hence, there exists an $S$ such that (i) and (ii) hold, so consider such an $S$.
  Remove $|S|-t \ge 2t$ vertices from $S$, at least one from each of the $2t$ pairs with smallest $d\ind{r}_{uv}$, to obtain a set of $t$ vertices $T$ such that, for all $m$ a multiple of $t$ in $[2t,m_{max}]$, we have
  \begin{align}
    \#\left\{uv\in \tbinom{T}{2}:d\ind{r}_{uv}\le m\right\}
    \le \max\left(0, \#\left\{uv\in \tbinom{S}{2}:d\ind{r}_{uv}\le m\right\} - 2t\right)
    \le \max(0, m - 3t).
  \label{}
  \end{align}
  Then, for all $m$ with $2t\le m\le m_{max}-t$, set $m'$ to be the smallest multiple of $t$ larger than $m$, so that
  \begin{align}
    \#\left\{uv\in \tbinom{T}{2}:d\ind{r}_{uv}\le m\right\}
    \le \#\left\{uv\in \tbinom{T}{2}:d\ind{r}_{uv}\le m'\right\}
    \le \max(0, m'-3t)
    \le m - 2t.
  \label{eq:bal-3}
  \end{align}
  Now, we show our matching condition holds.
  Setting $m=2t$ in \eqref{eq:bal-3}, we have $x_{uv} > 2t$ for all $uv\in\binom{T}{2}$.
  Hence, for any nonempty subset $F\subset\binom{T}{2}$ of size at most $t$, any $uv\in F$ satisfies $x_{uv}>t+|F|$.
  If $F\subset\binom{T}{2}$ has size greater than $t$, then, by setting $m=t+|F|$ in \eqref{eq:bal-3}, we know that there are at most $m-2t=|F|-t$ pairs $uv\in F$ such that $d\ind{r}_{uv}\le t+|F|$, so again there exists $uv\in F$ such that $d\ind{r}_{uv} > t+|F|$.
  We conclude that, for all nonempty subsets of pairs $F\subset\binom{T}{2}$, there exists $uv\in F$ such that $|N\ind{r}(F)|\ge d\ind{r}_{uv} \ge t+|F|$.
  By Lemma~\ref{lem:hall}, there exists a red hedgehog with body $T$.
\end{proof}

\subsection{Proof of Theorem~\ref{thm:main}}
\label{ssec:alg}

\subsubsection{Proof outline}

To prove Theorem~\ref{thm:main}, we follow the proof of Lemma~\ref{lem:motiv-1}.
First, ``peel off'' vertices $v$ into a set $S$ to try to find a blue or red hedgehog.\footnote{
For technical reasons, we peel vertices to find both blue and red hedgehogs, as opposed to Lemma~\ref{lem:motiv-1} where we only peeled vertices to find a blue hedgehog.}
If we succeed, we are done.
If we fail, we end up with an induced two-colored hypergraph that is ``balanced'' in the sense of Lemma~\ref{lem:bal}.
In this case, we simply apply Lemma~\ref{lem:bal}.

In the proof of Lemma~\ref{lem:motiv-1}, we started with $X=V$ and iteratively removed from $X$ a vertex $v$ and a set $\hat U(v)$ of size $t$ such that, for all $u\in \hat U(v)$, vertices $u$ and $v$ share many blue triples.
This deletes $O(t)$ vertices per round, which is small enough for the argument to succeed.
For general hypergraphs, we peel off vertices $v$ with many ``blue-heavy neighbors'', meaning there exists some $m$ such that $|U\ind{b}_{\le m}(v, X)|\ge 10m$.\footnote{
For technical reasons, we peel vertices $v$ in increasing order of the corresponding $m$.}
However, $m$ can be $\Theta(t^2)$, so if we simply deleted $v$ along with $10m$ of its blue-heavy neighbors $\hat U\ind{b}(v)\subset U\ind{b}_{\le m}(v, X)$, we could delete $\Theta(t^2)$ vertices for every $v$, which is too many.
Instead, when we peel off $v$, we delete $v$ from $X$, add a \emph{penalty} of $t/m$ to each $u\in \hat U\ind{b}(v)$, accumulated as $\alpha\ind{b}(u)$, and delete from $X$ every vertex $u$ with $\alpha\ind{b}(u)\ge 1/2$. 
With these penalties, we guarantee that, on average, we delete $O(t)$ vertices from $X$ per peeled vertex $v$.

However, we need more care.
In Lemma~\ref{lem:motiv-1}, we can find a hedgehog with body $S$ because, for any peeled vertices $v,v'\in S$, the edges $\{u,v,v'\}$ are blue for \emph{every} $u\in\hat U(v)$. 
However, in our procedure, for a $v$ chosen with corresponding $\hat U\ind{b}(v)$ of size $10m$, there are some vertices $w$ such that $\{u,v,w\}$ is blue for few (at most $4m$) vertices $u\in\hat U\ind{b}(v)$.
We denote this set of ``bad'' vertices by $B\ind{b}(v)$.
As much as possible, we wish to avoid choosing both $v$ and, at some later step, $w\in B\ind{b}(v)$ for the body $S\ind{b}$ of our blue hedgehog.
Ideally, we simply delete all vertices $u\in B(v)$ in the step we peel off $v$.
However, $B\ind{b}(v)$ can have $\Omega(m)$ vertices, which again could be too many if $m=\Theta(t^2)$.
Instead, for each $w\in B\ind{b}(v)$ we add a \emph{penalty} of $t/d\ind{b}_{wv}$, accumulated as $\beta\ind{b}(w)$, and delete from $X$ every vertex $w$ with $\beta\ind{b}(w)\ge 1/4$.
We guarantee that, on average, we delete $O(t\ln t)$ vertices from $X$ per peeled vertex $v$ (Lemma~\ref{lem:alg-3}).

\begin{figure}
    \label{fig:2}
    \centerline{
      \includegraphics[height=150px]{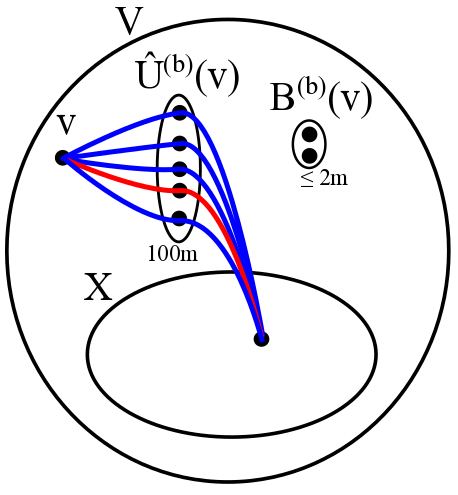}
      \qquad
      \qquad
      \includegraphics[height=150px]{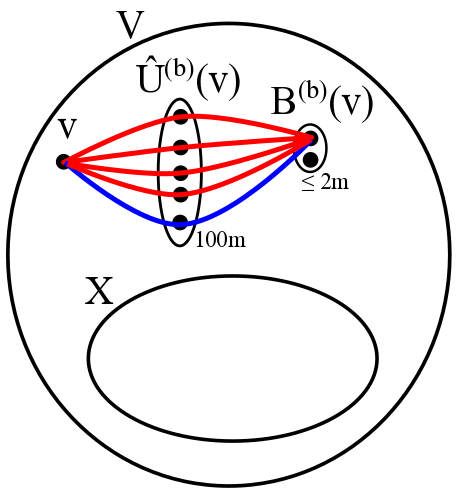}
    }
    \caption{Peeling $v$ with many blue-heavy neighbors. For every $w\in X$, edge $\{u,v,w\}$ is blue for many $u\in\hat U\ind{b}(v)$. Vertices $w\in B\ind{b}(v)$ are the exception. Ideally we simply delete vertex $v$, set $\hat U\ind{b}(v)$, and set $B\ind{b}(v)$ from $X$ (depicted), but instead we maintain fractional penalties $\alpha\ind{\chi}(\cdot)$ and $\beta\ind{\chi}(\cdot)$. We have $|\hat U\ind{b}(v)|=10m$ by definition, and $|B\ind{b}(v)|\le 2m$ by Lemma~\ref{lem:alg-2}.}
  \end{figure}

To finish the proof, we show, if our peeling produces a set $S\ind{b}=\{v_1,\dots,v_t\}$ (where $v_i$ is chosen before $v_{i+1}$), then, because we track the penalties $\alpha\ind{b}(u)$ and $\beta\ind{b}(w)$ carefully, the matching condition of Lemma~\ref{lem:hall} holds.
On the other hand, if the peeling procedure fails, the subhypergraph induced by $X$ is large and balanced, in which case we apply Lemma~\ref{lem:bal}.

\subsubsection{The peeling procedure}

We now describe the procedure formally.
Start with $S\ind{b}=S\ind{r}=\emptyset$, and $X=V$.
For all $u\in V$, initialize $\alpha\ind{r}(u) = \alpha\ind{b}(u) = \beta\ind{r}(u) = \beta\ind{b}(u) = 0$.
If, at any point, $S\ind{b}$ or $S\ind{r}$ has $t$ vertices, stop.

\newcommand\Stage{\textnormal{Stage}}
\newcommand\Peel{\textnormal{Peel}}
Recall that $m_{max}=2t+\binom{t}{2}$.
For $m=2t,2t+1,\cdots, m_{max}$, do the following, which we refer to as $\Stage(m)$.
\begin{enumerate}
  \item While there exists a vertex $v\in X$ and a color $\chi\in \{b,r\}$ such that $|U\ind{\chi}_{\le m}(v,X)| \ge 10m$: 
  \begin{enumerate}
    \item Let $\hat U\ind{\chi}(v)$ be the set $U\ind{\chi}_{\le m}(v,X)$ truncated to $10m$ vertices arbitrarily.
    \item Let $B\ind{\chi}(v) = \left\{w:\abs{u\in \hat U\ind{\chi}(v): \{u,v,w\}\text{ is color $\chi$}}\le 4m\right\}.$
    \item Add $v$ to $S\ind{\chi}$.
    \item For all $u\in \hat U\ind{\chi}(v)$, add $t/m$ to $\alpha\ind{\chi}(u)$.
    \item For all $w\in B\ind{\chi}(v)$, add $\min(1/4, t/d\ind{\chi}_{vw})$ to $\beta\ind{\chi}(w)$.
    \item Delete from $X$ all vertices $u$ with $\alpha\ind{\chi}(u) \ge 1/2$ or $\beta\ind{\chi}(u) \ge 1/4$.
    \item Delete $v$ from $X$.
  \end{enumerate}
\end{enumerate}

Note that $B\ind{\chi}(v)$ and $\hat U\ind{\chi}(v)$ are only defined for $v\in S\ind{\chi}$.
We refer to steps 1(a)-1(g) as the \emph{peeling step} for $v$, denoted $\Peel(v)$.
We let $m_v$ denote the value such that the peeling step for $v$ occurred during $\Stage(m_v)$, and call $m_v$ the \emph{peeling parameter} of $v$.
Throughout the analysis, let $X_v$ denote the set $X$ immediately before $\Peel(v)$.
For any $m\in[2t,m_{max}]$, let $X_m$ denote the set $X$ immediately after $\Stage(m)$, so that $X_{m_{max}}$ is the set $X$ at the end of the peeling procedure.

The above process terminates in one of two ways.
Either we ``get stuck'', i.e. we complete Stage$(m_{max})$ and $|S\ind{b}|<t$ and $|S\ind{r}|<t$, or we ``finish'', i.e. we terminate earlier with $|S\ind{b}|=t$ or $|S\ind{r}|=t$.
We show there is a monochromatic hedgehog in each case.
In Subsection~\ref{sssec:case-1}, we handle the case where we ``get stuck''.
In Subsection~\ref{sssec:case-2}, we handle the case where we ``finish''.

\subsubsection{Basic facts about peeling}

We first establish the following facts about the procedure.
\begin{lemma}
  For any $m$ such that $2t\le m\le m_{max}$, for any time in the procedure after $\Stage(m)$, the following holds: for all colors $\chi\in\{b,r\}$, for all $m'$ with $2t\le m'\le m$, and for all vertices $v\in X$, we have $|U\ind{\chi}_{\le m'}(v,X)| < 10m'$.
\label{lem:alg-1}
\end{lemma}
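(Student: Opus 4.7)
The plan is to prove the lemma by induction on $m$, exploiting a simple monotonicity observation about the peeling procedure: the set $X$ is never grown, only shrunk (vertices are removed via steps 1(f) and 1(g) of $\Peel(v)$, and never added). Consequently, for any vertex $v$ that remains in $X$, any color $\chi$, and any threshold $m'$, the set $U\ind{\chi}_{\le m'}(v,X)=\{u\in X: d\ind{\bar\chi}_{uv}\le m'\}$ can only lose elements as time progresses. So once the desired inequality $|U\ind{\chi}_{\le m'}(v,X)|<10m'$ holds for some $v\in X$, it holds for that $v$ at every later time at which $v$ is still in $X$.

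With this observation, the induction is almost immediate. For the base case $m=2t$, the while loop of $\Stage(2t)$ terminates only when there is no $v\in X$ and no $\chi$ with $|U\ind{\chi}_{\le 2t}(v,X)|\ge 10\cdot 2t=20t$. This is exactly the statement we need for $m=m'=2t$. For the inductive step, assume the conclusion holds at the end of $\Stage(m-1)$ for all $m'\in[2t,m-1]$. During $\Stage(m)$ and thereafter, $X$ only shrinks, so by the monotonicity above the inequality $|U\ind{\chi}_{\le m'}(v,X)|<10m'$ for $m'\le m-1$ is preserved for every $v$ still in $X$. For the new value $m'=m$, the exit condition of the while loop of $\Stage(m)$ gives $|U\ind{\chi}_{\le m}(v,X)|<10m$ for every $v\in X$ and every $\chi$; again monotonicity propagates this to all subsequent times.

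There is essentially no obstacle here beyond verifying the monotonicity claim, which is just the observation that if $X'\subseteq X$ then $\{u\in X': d\ind{\bar\chi}_{uv}\le m'\}\subseteq\{u\in X: d\ind{\bar\chi}_{uv}\le m'\}$ (the quantity $d\ind{\bar\chi}_{uv}$ is a global quantity of the fixed coloring of $\mathcal H$ and does not depend on $X$). The only place where care is needed is to note that, inside a stage, vertices may be deleted from $X$ mid-loop, so the condition for $m'=m$ really only holds after the while loop of $\Stage(m)$ has completed, not at every intermediate moment of that stage; but the statement of the lemma only asserts the bound for times after $\Stage(m)$, which is exactly when the termination condition has fired.
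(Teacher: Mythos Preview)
Your proof is correct and takes essentially the same approach as the paper: both arguments use the termination condition of the while loop in $\Stage(m')$ to obtain $|U\ind{\chi}_{\le m'}(v,X)|<10m'$ at the end of that stage, and then propagate this forward using the observation that $X$ is nonincreasing and $d\ind{\bar\chi}_{uv}$ is a fixed global quantity. The only cosmetic difference is that you frame the sweep over $m'\le m$ as an explicit induction, whereas the paper proves the bound for each fixed threshold and leaves the observation that ``after $\Stage(m)$'' implies ``after $\Stage(m')$'' implicit.
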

\begin{proof}
  Fix $m$ with $2t\le m\le m_{max}$.
  We have $|U\ind{\chi}_{\le m}(v,X_m)| < 10m$ for all $v\in X_m$:
  if not, then there exists a vertex $v\in X_m$ with $|U\ind{\chi}_{\le m}(v,X_m)| \ge 10m$, in which case we would have peeled vertex $v$ during $\Stage(m)$, and we would have deleted $v$ from $X_m$ during $\Peel(v)$, which is a contradiction.
  Throughout the procedure, $X$ is nonincreasing.
  Thus, at any point in the procedure after $\Stage(m)$, we have $X\subset X_m$, so for all $v\in X$, we have $v\in X_m$ and $|U\ind{\chi}_{\le m}(v,X)| \le |U\ind{\chi}_{\le m}(v,X_m)| < 10m$.
\end{proof}

\begin{lemma}
  For all colors $\chi\in\{b,r\}$ and all vertices $v\in S\ind{\chi}$, we have $|B\ind{\chi}(v)|\le 2m_v$. 
\label{lem:alg-2}
\end{lemma}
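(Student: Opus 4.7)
The plan is a straightforward double-counting argument applied to red triples of the form $\{u,v,w\}$ with $u\in\hat U\ind{b}(v)$ and $w\in B\ind{b}(v)$ (and symmetrically for $\chi=r$). By symmetry it suffices to handle the case $\chi=b$, so fix $v\in S\ind{b}$ and set $m\defeq m_v$ for brevity. Let $N$ denote the number of red triples $\{u,v,w\}$ with $u\in\hat U\ind{b}(v)$ and $w\in B\ind{b}(v)\setminus\{v\}$.

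For the upper bound on $N$, I would use the defining property of $\hat U\ind{b}(v)\subseteq U\ind{b}_{\le m}(v,X_v)$: every $u\in\hat U\ind{b}(v)$ satisfies $d\ind{r}_{uv}\le m$, i.e.\ the pair $uv$ lies in at most $m$ red triples. Summing over the $|\hat U\ind{b}(v)|=10m$ choices of $u$ gives $N\le 10m^2$.

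For the lower bound on $N$, I would fix $w\in B\ind{b}(v)\setminus\{v\}$ and invoke the definition of $B\ind{b}(v)$: at most $4m$ vertices $u\in\hat U\ind{b}(v)$ make $\{u,v,w\}$ blue, so at least $10m-4m=6m$ of them make it non-blue and hence red (with a possible $-1$ correction in the case $w\in\hat U\ind{b}(v)$, where we must exclude the ill-defined choice $u=w$). Summing over $w$ yields $N\ge(6m-1)(|B\ind{b}(v)|-1)$, where the outer $-1$ accounts for possibly having $v\in B\ind{b}(v)$.

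Combining the two estimates gives
\begin{align*}
|B\ind{b}(v)|\ \le\ 1+\frac{10m^2}{6m-1},
\end{align*}
which is easily seen to be strictly less than $2m$ whenever $m\ge 2t\ge 20$. The main (and essentially only) obstacle is the pedantic handling of the degenerate cases $w=v$ and $w\in\hat U\ind{b}(v)$, but each introduces only an $O(1)$ additive loss that is absorbed by the hypothesis $m_v\ge 2t\ge 20$; no further ideas are needed.
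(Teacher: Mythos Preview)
Your approach is the same double-counting argument as the paper's: bound the number of red triples $\{u,v,w\}$ with $u\in\hat U\ind{b}(v)$ and $w\in B\ind{b}(v)$ from above by $10m_v^2$ (via $d\ind{r}_{uv}\le m_v$) and from below by $6m_v\cdot|B\ind{b}(v)|$ (via the definition of $B\ind{b}(v)$), then divide. The only difference is that you are more scrupulous about the degenerate cases $w=v$ and $w\in\hat U\ind{b}(v)$, which the paper silently ignores; since the resulting bound $10m_v/6$ already has slack below $2m_v$, your $O(1)$ corrections are harmless and the argument is correct.
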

\begin{proof}
  We prove this for $\chi=b$, and the case $\chi=r$ follows from symmetry.
  We double-count the number $Z$ of red triples $\{u,v,w\}$ such that $u\in \hat U\ind{b}(v)$ and $w\in B\ind{b}(v)$.
  On one hand, every $u\in \hat U\ind{b}(v)$ is in at most $m_v$ red triples because we chose $\hat U\ind{b}(v)$ as a subset of $U\ind{b}_{\le m_v}(v,X_v)$, so the total number of red triples is at most $m_v\cdot |\hat U\ind{b}(v)| = 10m_v^2$.
  On the other hand, by definition of $B\ind{b}(v)$, each $w\in B\ind{b}(v)$ is in at least $|\hat U\ind{b}(v)| - 4m_v = 6m_v$ such red triples.
  Thus, the number of such triples is at least $|B\ind{b}(v)|\cdot 6m_v$.
  Hence, $10m_v^2\ge Z \ge 6m_v |B\ind{b}(v)|$ so $|B\ind{b}(v)|\le 2m_v$ as desired.
\end{proof}

\begin{lemma}
  For all colors $\chi\in\{b,r\}$ and all vertices $v,v'\in S\ind{\chi}$, we have $d\ind{\chi}_{vv'}\ge 4t$. 
\label{lem:alg-4}
\end{lemma}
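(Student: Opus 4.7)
The plan is to handle $\chi=b$ by symmetry, and without loss of generality take $v$ to be peeled before $v'$. Since $v'\in S\ind{b}$, $v'$ is never deleted from $X$ before its own peeling step; in particular $v'\in X_v$ and $v'\in X$ immediately after $\Peel(v)$. I would then split into two cases based on whether $v'\in B\ind{b}(v)$.

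If $v'\notin B\ind{b}(v)$, the definition of $B\ind{b}(v)$ in step~1(b) says precisely that strictly more than $4m_v$ vertices $u\in \hat U\ind{b}(v)$ make $\{u,v,v'\}$ blue. These $u$'s are distinct and none equals $v$ or $v'$, so they yield at least $4m_v+1$ distinct blue triples through the pair $\{v,v'\}$. Since stages begin at $m=2t$, we have $m_v\ge 2t$, and therefore $d\ind{b}_{vv'}\ge 4m_v+1\ge 8t+1>4t$.

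If instead $v'\in B\ind{b}(v)$, then step~1(e) of $\Peel(v)$ increments $\beta\ind{b}(v')$ by $\min(1/4,\,t/d\ind{b}_{vv'})$. Because $\beta\ind{b}(v')$ is nonnegative before the increment and $v'$ survives step~1(f)---which requires $\beta\ind{b}(v')<1/4$ afterwards---the increment itself must satisfy $\min(1/4,\,t/d\ind{b}_{vv'})<1/4$. This forces $t/d\ind{b}_{vv'}<1/4$, equivalently $d\ind{b}_{vv'}>4t$.

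The main point---and the reason the $\beta$-penalty is defined with the cap $\min(1/4,\cdot)$ in the first place---is the second case: any $v'\in B\ind{b}(v)$ with $d\ind{b}_{vv'}\le 4t$ would incur a full $1/4$ penalty at once and be deleted, so the lemma is essentially built into the procedure. Once the role of this cap is recognized there is no real obstacle; both cases become one-line consequences of the peeling procedure.
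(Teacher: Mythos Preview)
Your proof is correct and uses essentially the same idea as the paper. The paper argues by contradiction: assuming $d\ind{\chi}_{vv'}<4t$ it observes directly that $v'\in B\ind{\chi}(v)$ (since fewer than $4t\le 4m_v$ triples through $vv'$ are color $\chi$ in all of $V$, hence certainly within $\hat U\ind{\chi}(v)$), and then runs exactly your Case~2 penalty argument; your Case~1 is thus an extra check the paper absorbs into that one observation.
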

\begin{proof}
  Assume for sake of contradiction that $d\ind{\chi}_{vv'}<4t$.
  Without loss of generality, $v$ was added to $S\ind{\chi}$ before $v'$.
  We have $d\ind{\chi}_{vv'} < 4t < 4m_v$, so during $\Peel(v)$, vertex $v'$ is included in $B\ind{\chi}(v)$.
  Hence, $\min(1/4, t/d\ind{\chi}_{vv'})= 1/4$ is added to $\beta\ind{\chi}_{\le 4t}(v')$ during 1(e) of $\Peel(v)$, so during 1(f) of $\Peel(v)$, vertex $v'$ is deleted from $X$ if it hasn't been deleted already.
  Thus, we could not have added $v'$ to $S\ind{\chi}$ after $\Peel(v)$, which is a contradiction, so $d\ind{\chi}_{vv'}\ge 4t$, as desired.
\end{proof}

\subsubsection{Bounding the number of deleted vertices}

\begin{lemma}
  \label{lem:alg-7}
  For all colors $\chi\in\{b,r\}$ and all vertices $v\in S\ind{\chi}$, during $\Peel(v)$, the total increase in $\alpha\ind{\chi}(u)$ over all $u\in V$ is exactly $10t$.
\end{lemma}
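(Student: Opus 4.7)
\medskip

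The plan is to simply unpack the definitions in the peeling step. Looking at $\Peel(v)$, the quantity $\alpha^{(\chi)}(u)$ is modified only in substep 1(d): for every $u \in \hat U^{(\chi)}(v)$, we add exactly $t/m_v$ to $\alpha^{(\chi)}(u)$. No other substep touches $\alpha^{(\chi)}$.

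Therefore the total increase in $\sum_{u \in V} \alpha^{(\chi)}(u)$ during $\Peel(v)$ equals $|\hat U^{(\chi)}(v)| \cdot (t/m_v)$. By substep 1(a), $\hat U^{(\chi)}(v)$ is defined as the set $U^{(\chi)}_{\le m_v}(v, X_v)$ truncated to exactly $10m_v$ vertices (this truncation is well-defined because $v$ was peeled during $\Stage(m_v)$, which only occurs if $|U^{(\chi)}_{\le m_v}(v, X)| \ge 10m_v$). Hence $|\hat U^{(\chi)}(v)| = 10m_v$, and the total increase is $10m_v \cdot (t/m_v) = 10t$.

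There is no real obstacle here; the statement is an immediate consequence of the fact that $\hat U^{(\chi)}(v)$ has size exactly $10m_v$ together with the constant per-vertex increment of $t/m_v$. The only thing to verify is that the truncation in 1(a) always produces a set of exactly $10m_v$ elements (not fewer), which follows from the triggering condition in step 1 of $\Stage(m_v)$.
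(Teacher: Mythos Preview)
Your proof is correct and follows exactly the same approach as the paper: compute the total increase as $|\hat U^{(\chi)}(v)|\cdot(t/m_v)=10m_v\cdot(t/m_v)=10t$, using that $\hat U^{(\chi)}(v)$ has exactly $10m_v$ elements by construction. Your version is slightly more verbose in justifying why the truncation yields exactly $10m_v$ elements, but the argument is identical.
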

\begin{proof}
  Fix $v\in S\ind{\chi}$.
  We have $|\hat U\ind{\chi}(v)|=10m_v$ by definition, and, for $u\in \hat U\ind{\chi}(v)$, each $\alpha\ind{\chi}(u)$ increases by exactly $t/m_v$, for a total increase of $10m_v\cdot (t/m_v) = 10t$.
\end{proof}

\begin{lemma}
  \label{lem:alg-3}
  For all colors $\chi\in\{b,r\}$ and all vertices $v\in S\ind{\chi}$, during $\Peel(v)$, the total increase in $\beta\ind{\chi}(w)$ over all $w\in V$ is at most $20t\ln t$.
\end{lemma}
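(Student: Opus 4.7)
My goal is to bound $T \defeq \sum_{w \in B\ind{\chi}(v)} \min(1/4, t/d\ind{\chi}_{vw})$, which by step~1(e) of $\Peel(v)$ is exactly the total increase in $\beta\ind{\chi}$ during $\Peel(v)$. The plan is to write $T$ as a layer-cake integral and to control the level sets $F(M) \defeq |\{w \in B\ind{\chi}(v) : d\ind{\chi}_{vw} \le M\}|$ using two complementary bounds that are tight in different regimes.

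The easy bound, from Lemma~\ref{lem:alg-2}, is $F(M) \le |B\ind{\chi}(v)| \le 2m_v$; by itself this only gives $T \le m_v/2 = \Theta(t^2)$, which is far too weak. The improvement comes from Lemma~\ref{lem:alg-1}: since $X_v \subseteq X_{m_v-1}$, for every $m'$ with $2t \le m' \le m_v-1$ we have
\[
|\{w \in X_v : d\ind{\chi}_{vw} \le m'\}| = |U\ind{\bar\chi}_{\le m'}(v,X_v)| < 10m'.
\]
Because $B\ind{\chi}(v) \subseteq X_v$ (as indicated by the figure caption), this upgrades to $F(M) < 10M$ on that range. Combining, $F(M) \le \min(2m_v, 10M)$ for every $M \ge 2t$.

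Using the identity $\min(1/4, t/d) = \int_0^{1/4} \mathbf{1}[d \le t/s]\,ds$, I get
\[
T = \int_0^{1/4} F(t/s)\,ds \le \int_0^{1/4} \min(2m_v, 10t/s)\,ds.
\]
The two branches of the $\min$ cross at $s_* \defeq 5t/m_v$. If $m_v \le 20t$, then $s_* \ge 1/4$, the integrand is simply $2m_v$, and $T \le m_v/2 \le 10t$. Otherwise, splitting the integral at $s_*$ gives
\[
T \le 2m_v \cdot s_* + 10t \int_{s_*}^{1/4} \frac{ds}{s} = 10t + 10t \ln \frac{m_v}{20t} \le 10t + 10t \ln t,
\]
where the last step uses $m_v \le m_{max} < t^2$. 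In both cases, since $t \ge 10$, we conclude $T \le 10t + 10t \ln t \le 20t \ln t$.

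The main conceptual obstacle is recognizing that the trivial bound $|B\ind{\chi}(v)| \le 2m_v$ is inadequate and that Lemma~\ref{lem:alg-1} supplies exactly the level-set bound $F(M) < 10M$ needed to replace the $m_v$ factor by a $\ln(m_v/t)$ factor. The $\ln t$ in the final estimate emerges naturally as the length of the integration interval $[s_*, 1/4]$ for $ds/s$, which is in turn dictated by the scale $t/d$ of the penalties being summed.
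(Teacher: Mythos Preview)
Your argument is correct and shares the paper's core strategy: control the level sets of $d\ind{\chi}_{v\,\cdot}$ using Lemma~\ref{lem:alg-1} together with the cap $|B\ind{\chi}(v)|\le 2m_v$ from Lemma~\ref{lem:alg-2}, and then sum. The paper carries this out as a discrete sum with an Abel-type rearrangement over the quantities $a_m$, while you package the same computation as the layer-cake integral $\int_0^{1/4}F(t/s)\,ds$; this is largely presentational. One genuine streamlining on your side: the paper additionally invokes Lemma~\ref{lem:alg-0} to push the bound $a_{\le m}\le 10m$ through the range $m_v\le m\le 4m_v$, whereas you notice that the crude cap $F(M)\le 2m_v$ already dominates $10M$ once $M\ge m_v/5$, so Lemma~\ref{lem:alg-0} is never needed.

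One caveat: your inequality $F(M)\le |U\ind{\bar\chi}_{\le M}(v,X_v)|$ (with $\bar\chi$ the opposite color) relies on $B\ind{\chi}(v)\subseteq X_v$, which you justify only by appeal to the figure caption. The formal definition in step~1(b) of the procedure does not restrict $w$ to $X$, so this containment is not literally established; the paper's own proof makes the same tacit move (its counts $a_m$ are over $w\in X_v$). The point is harmless for the downstream use in Lemma~\ref{lem:alg-8}, since increments to $\beta\ind{\chi}(w)$ for $w\notin X_v$ trigger no further deletions, but it would be cleaner to either read $B\ind{\chi}(v)$ as defined within $X_v$ or to note explicitly that only $w\in X_v$ matter for the deletion count.
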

\begin{proof}
  By symmetry, it suffices to prove the lemma for $\chi=b$.
  Let $v\in S\ind{b}$.
  For $m=0,\dots, 4m_v$, let
  \begin{align}
    a_m \ &\defeq \  \#\{w\in X_v:d\ind{b}_{vw} = m\}  \\
    a_{\le m} \ &\defeq \ a_0+a_1+\cdots+a_m = \abs{U\ind{r}_{\le m}(v,X_v)}. 
  \end{align}
  $\Peel(v)$ is after $\Stage(m_v-1)$. 
  Hence, by Lemma~\ref{lem:alg-1}, for $2t\le m \le m_v-1$, we have $a_{\le m} \le 10 m$.
  We know
  \begin{align}
    |U\ind{b}_{\le 4m_v}(v,X_v)|\ge|U\ind{b}_{\le m_v}(v,X_v)| \ge 10m_v > 8m_v,
  \end{align}
  where the second inequality holds because $v$ was chosen to be peeled in $\Stage(m_v)$.
  Hence, by Lemma~\ref{lem:alg-0}, $a_{\le 4m_v} = |U\ind{r}_{\le 4m_v}(v,X_v)|\le |U\ind{r}_{\le 4m_v}(v)|\le 8m_v$.
  As $a_{\le m}$ is non-decreasing in $m$, we conclude $a_{\le m}\le 10 m $ for $2t\le m\le 4m_v$.

  For $m=0,\dots,4m_v$, for any $w$ with $d\ind{b}_{vw}=m$, the peeling of $v$ increases $\beta\ind{b}(w)$ by exactly $\min(1/4, t/m)$.
  Thus, for $a_m$ many $w$, the penalty $\beta\ind{b}(w)$ increases by $\min(1/4, t/m)$.
  Furthermore, $\beta\ind{b}(w)$ increases only for $w\in B\ind{b}(v)$, which has at most $2m_v$ vertices by Lemma~\ref{lem:alg-2}. 
  For $2m_v - a_{\le 4m_v}$ vertices $w$, $\beta\ind{b}(w)$ increases by less than $t/4m_v$, giving a total increase in $\beta\ind{b}(w)$ of less than $t$ from those vertices.
  The total increases in $\beta\ind{b}(w)$ is thus less than
  \begin{align}
    \frac{1}{4}\left( a_0+a_1+\cdots+a_{4t} \right) + \frac{a_{4t+1}t}{4t+1} + \cdots + \frac{a_{4m_v}t}{4m_v} + t.
  \label{eq:bound-1}
  \end{align}
  The coefficients of $a_0,\dots,a_{4m_v}$ in \eqref{eq:bound-1} are nonincreasing, so \eqref{eq:bound-1} is $t$ plus a positive linear combination of $a_{\le 4t}, a_{\le 4t+1}, \cdots, a_{\le 4m_v}$.
  Subject to $a_{\le m}\le 10m$ for $2t\le m\le 4m_v$, all of $a_{\le 4t}, a_{\le 4t+1},\dots, a_{\le 4m_v}$ are simultaneously maximized when $a_0=0$ and $a_m = 10$ for $m=1,\dots,4m_v$, so \eqref{eq:bound-1} is maximized there as well.
  Hence,
  \begin{align}
    \text{Total increase in $\beta\ind{b}(w)$}
    \ &< \  \frac{1}{4}\left( a_0+a_1+\cdots+a_{4t} \right) + \frac{a_{4t+1}t}{4t+1} + \cdots + \frac{a_{4m_v}t}{4m_v} + t \nonumber\\
    \ &\le \ t + \frac{1}{4}\cdot 40t + \frac{10t}{4t+1} + \frac{10t}{4t+2}+\cdots + \frac{10t}{4m_v} \nonumber\\
    \ &\le \ 11t+10t\ln (4m_v/4t) \ < \   20t\ln t,
  \label{}
  \end{align}
  where, for the last inequality, we used $m_v\le t^2$ and $t\ge 10$.
  This is what we wanted to show.
\end{proof}

\begin{lemma}
  The total number of vertices deleted from $X$ in the peeling procedure is at most $200t^2\ln t$.
\label{lem:alg-8}
\end{lemma}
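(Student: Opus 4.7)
The plan is to partition the deleted vertices by the cause of their deletion and to give a budget-style bound for each part using Lemmas~\ref{lem:alg-7} and~\ref{lem:alg-3}. Concretely, every deletion happens either in step 1(g), where the freshly peeled vertex $v$ is removed, or in step 1(f), where a vertex $u$ is removed because some penalty $\alpha\ind{\chi}(u)$ or $\beta\ind{\chi}(u)$ has crossed its threshold. I would bound these three causes (the peeled-vertex deletions, the $\alpha$-triggered deletions, and the $\beta$-triggered deletions) separately and then add them up.

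First, the step 1(g) deletions: since the procedure halts as soon as $|S\ind{b}|$ or $|S\ind{r}|$ reaches $t$, at most $t$ vertices are peeled into each of $S\ind{b}$ and $S\ind{r}$, so there are at most $2t-1\le 2t$ peelings, contributing at most $2t$ such deletions. Second, for the $\alpha$-triggered deletions, Lemma~\ref{lem:alg-7} says each call to $\Peel(v)$ adds exactly $10t$ to the total $\alpha\ind{\chi}$-mass $\sum_{u\in V}\alpha\ind{\chi}(u)$. Summing over the at most $t$ peelings of each color bounds the total $\alpha\ind{\chi}$-mass by $10t^2$ per color; since a vertex is deleted for color $\chi$ only once its $\alpha\ind{\chi}$-mass reaches $1/2$, I get at most $20t^2$ such deletions per color and $40t^2$ in total.

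Third, for the $\beta$-triggered deletions, Lemma~\ref{lem:alg-3} says each call to $\Peel(v)$ contributes at most $20t\ln t$ to the total $\beta\ind{\chi}$-mass. Summing over at most $t$ peelings per color bounds the total $\beta\ind{\chi}$-mass by $20t^2\ln t$ per color; dividing by the deletion threshold $1/4$ gives at most $80t^2\ln t$ deletions per color and $160t^2\ln t$ in total. Combining the three contributions, the total number of deletions is at most
\begin{align}
2t+40t^2+160t^2\ln t,
\end{align}
which is at most $200t^2\ln t$ for $t\ge 10$ since $\ln t\ge \ln 10>1$ easily absorbs the $40t^2+2t$ lower-order terms.

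There is no real obstacle beyond careful bookkeeping. The only subtle point is that the two colors must be handled separately (one per color per penalty type), which doubles the raw mass budget; counting peelings per color rather than in total keeps the arithmetic tight enough to land below the target constant $200$.
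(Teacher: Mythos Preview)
Your argument is correct and essentially identical to the paper's: both partition deletions into (i) peeled vertices, (ii) $\alpha$-threshold deletions, and (iii) $\beta$-threshold deletions, bound the total $\alpha$- and $\beta$-mass per color via Lemmas~\ref{lem:alg-7} and~\ref{lem:alg-3} summed over $|S\ind{\chi}|\le t$ peelings, and divide by the respective thresholds $1/2$ and $1/4$ to get $2t+40t^2+160t^2\ln t<200t^2\ln t$. The only cosmetic difference is that the paper writes the Markov-style step as $\#\{u:\alpha\ind{\chi}(u)\ge 1/2\}\le 2\sum_u\alpha\ind{\chi}(u)$ explicitly, whereas you phrase it in words.
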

\begin{proof}
  A vertex is deleted either for being added to $S\ind{b}$ or $S\ind{r}$, having $\alpha\ind{b}(\cdot)$ or $\alpha\ind{r}(\cdot)$ at least 1/2, or having $\beta\ind{b}(\cdot)$ or $\beta\ind{r}(\cdot)$ at least 1/4. 
  At the end of the procedure, we have the following inequalities.
  For all $\chi\in\{b,r\}$ and all $u\in V$, we have $\alpha\ind{\chi}(u)$ and $b\ind{\chi}(u)$ are initially 0 and increase only during the peeling step of some vertex $v\in S\ind{\chi}$.
  Hence, by Lemma~\ref{lem:alg-7}, for $\chi\in \{b,r\}$,
  \begin{align}
    \sum_{u\in V}^{} \alpha\ind{\chi}(u) = 10t\cdot |S\ind{\chi}|\le 10t^2.
  \end{align}
  Furthermore, by Lemma~\ref{lem:alg-3}, for $\chi\in\{b,r\}$,
  \begin{align}
    \sum_{u\in V}^{} \beta\ind{\chi}(u) \le 20t\ln t\cdot |S\ind{\chi}|\le 20t^2\ln t.
  \end{align}
  We conclude that, at the end of the procedure,
  \begin{align}
    \#\{\text{deleted $u$}\}
    \ &\le \ |S\ind{b}| + |S\ind{r}| 
      + \#\{u: \text{$\alpha\ind{b}(u)\ge 1/2$}\}
      + \#\{u: \text{$\alpha\ind{r}(u)\ge 1/2$}\} \nonumber\\
      &\qquad+ \#\{u: \text{$\beta\ind{b}(u)\ge 1/4$}\} 
      + \#\{u: \text{$\beta\ind{r}(u)\ge 1/4$}\}  \nonumber\\
    \ &< \  2t + \sum_{u\in V}^{} \left(2\alpha\ind{b}(u)+2\alpha\ind{r}(u) + 4\beta\ind{b}(u) + 4\beta\ind{r}(u)\right)  \nonumber\\
    \ &\le \   2t + 2\cdot 10t^2 + 2\cdot 10t^2 + 4\cdot 20t^2\ln t + 4\cdot 20t^2\ln t  \nonumber\\
    \ &< \   200t^2\ln t. \nonumber \qedhere
  \label{}
  \end{align}
\end{proof}

\subsubsection{Case 1: Peeling procedure gets stuck}
\label{sssec:case-1}

By Lemma~\ref{lem:alg-8}, the number of vertices deleted in the peeling process is at most $200t^2\ln t$, so, at the end of the peeling procedure, $|X|\ge (200t^2\ln t + 400t^2) - 200t^2\ln t = 400t^2$.

Consider the complete 2-colored subhypergraph $\mathcal{H}'$ of $\mathcal{H}$ induced by the vertex set $X$.
By Lemma~\ref{lem:alg-1}, at the end of the procedure, for all $m=2t,2t+1,\dots,m_{max}$ and all $v\in X$,
\begin{align}
  |U\ind{b}_{\le m}(v,X)| < 10m,
  \qquad 
  |U\ind{r}_{\le m}(v,X)| < 10m.
\label{}
\end{align}
Applying Lemma~\ref{lem:bal} to $\mathcal{H}'$ with $c=10$, we conclude $\mathcal{H}'$ (and hence $\mathcal{H}$) has a red hedgehog $H_t$.\footnote{By the same reasoning $\mathcal{H}'$ also has a blue hedgehog.}

\subsubsection{Case 2: Peeling procedure finishes}
\label{sssec:case-2}

Suppose we finish with $|S\ind{b}|=t$.
The analysis for $|S\ind{r}|=t$ is symmetrical.
We try to find a blue hedgehog.
For brevity, in the rest of this section, let $S=S\ind{b}$.
Let $S=\{v_1, \cdots, v_t\}$, where the $v_i$ were chosen in the order $v_1,\dots,v_t$.
For $i=1,\dots,t$, let $m_i=m_{v_i}$ be the peeling parameter for $v_i$, so that $m_1\le m_2\le \cdots\le m_t$.

\begin{definition}
  Call a pair $v_iv_j\in\binom{S}{2}$ with $i<j$ \emph{bad} if $v_j\in B\ind{b}(v_i)$.
  Otherwise, call $v_iv_j\in\binom{S}{2}$ \emph{good}.
  Let $E_{bad} \subset \binom{S}{2}$ be the set of all bad pairs and let $E_{good}\subset\binom{S}{2}$ be the set of all good pairs, so that $\binom{S}{2} = E_{bad}\cup E_{good}$ is a partition.
\end{definition}

\begin{lemma}
  \label{lem:alg-5}
  \begin{align}
    \sum_{v_iv_j\in E_{bad}}^{} \frac{1}{d\ind{b}_{v_iv_j}} < \frac{1}{4}.
  \label{}
  \end{align}
\end{lemma}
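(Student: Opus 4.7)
The plan is to interpret the left-hand side as (up to a factor of $t$) a sum of the $\beta^{(b)}$-penalties accumulated on the vertices $v_1,\dots,v_t$, and then use the fact that each $v_j$ survived long enough to be peeled.

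First, I would observe that, by Lemma~\ref{lem:alg-4}, for any $i<j$ with $v_i,v_j\in S^{(b)}$ we have $d^{(b)}_{v_iv_j}\ge 4t$, so $t/d^{(b)}_{v_iv_j}\le 1/4$, and hence the quantity added to $\beta^{(b)}(v_j)$ in step 1(e) of $\text{Peel}(v_i)$ (if anything is added at all) is exactly $t/d^{(b)}_{v_iv_j}$, with the $\min(1/4,\cdot)$ never binding. Moreover, by the definition of a bad pair, this increment occurs precisely when $v_iv_j\in E_{bad}$ (i.e., $v_j\in B^{(b)}(v_i)$).

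Next, I would track $\beta^{(b)}(v_j)$ up to the moment immediately before $\text{Peel}(v_j)$. Since $\beta^{(b)}$ is only ever incremented during a peeling step of a vertex of $S^{(b)}$, the value of $\beta^{(b)}(v_j)$ at that moment is exactly
\begin{align}
  \sum_{\substack{i<j\\ v_iv_j\in E_{bad}}} \frac{t}{d^{(b)}_{v_iv_j}}.
\end{align}
On the other hand, because $v_j$ is peeled (so in particular $v_j\in X$ at the start of $\text{Peel}(v_j)$), it was not deleted in step 1(f) of any earlier peeling step, and so $\beta^{(b)}(v_j)<1/4$ at that moment. Combining, $\sum_{i<j,\,v_iv_j\in E_{bad}} 1/d^{(b)}_{v_iv_j} < 1/(4t)$.

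Finally, I would sum over $j=1,\dots,t$ to get
\begin{align}
  \sum_{v_iv_j\in E_{bad}} \frac{1}{d^{(b)}_{v_iv_j}} \ =\ \sum_{j=1}^{t}\sum_{\substack{i<j\\ v_iv_j\in E_{bad}}}\frac{1}{d^{(b)}_{v_iv_j}} \ <\ t\cdot \frac{1}{4t} \ =\ \frac{1}{4},
\end{align}
which is the desired bound. There is no serious obstacle: the only subtlety is verifying that the $\min(1/4,t/d^{(b)}_{v_iv_j})$ collapses to $t/d^{(b)}_{v_iv_j}$, which is exactly what Lemma~\ref{lem:alg-4} was set up to guarantee, and that $v_j\in S^{(b)}$ forces the strict inequality $\beta^{(b)}(v_j)<1/4$ at the required moment.
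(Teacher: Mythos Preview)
Your proof is correct and follows essentially the same approach as the paper: both arguments bound $\beta^{(b)}(v_j)$ at the moment of $\Peel(v_j)$ by $1/4$, use Lemma~\ref{lem:alg-4} to strip the $\min$, and then sum over $j$. The only cosmetic difference is that the paper sums over $j=2,\dots,t$ (since the $j=1$ term is empty), obtaining $(t-1)/(4t)<1/4$, while you sum over all $j$ and get the same strict bound.
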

\begin{proof}
  Fix $2\le j\le t$.
  Consider all bad pairs $v_iv_j$ with $i < j$.
  At the peeling of $v_j$, $\beta(v_j) < 1/4$, otherwise $v_j$ would have been deleted from $X$ and we could not have peeled $v_j$.
  Hence, at the peeling of $v_j$,
  \begin{align}
    \frac{1}{4}
    \ > \  \beta\ind{b}(v_j) 
    \ &= \ \sum_{\substack{i: i<j, \\ v_j\in B\ind{b}(v_i)}}^{} \min\left(\frac{1}{4},\frac{t}{d\ind{b}_{v_iv_j}}\right)
    \ = \ \sum_{\substack{i: i<j, \\ v_iv_j\in E_{bad}}}^{} \min\left(\frac{1}{4},\frac{t}{d\ind{b}_{v_iv_j}}\right)
    \ = \ \sum_{\substack{i: i<j, \\ v_iv_j\in E_{bad}}}^{} \frac{t}{d\ind{b}_{v_iv_j}}.
  \label{}
  \end{align}
  The first equality is by definition of $\beta\ind{b}(v_j)$, the second is by definition of $E_{bad}$, and the last is because $d\ind{b}_{v_iv_j}\ge 4t$ for all $i<j$ by Lemma~\ref{lem:alg-4}.
  Thus,
  \begin{align}
    \sum_{v_iv_j\in E_{bad}}^{}  \frac{1}{d\ind{b}_{v_iv_j}}
    \ &= \ \sum_{j=2}^{t}  \sum_{\substack{i:i<j, \\ v_iv_j\in E_{bad}}}^{} \frac{1}{d\ind{b}_{v_iv_j}}
    \ \le \  \sum_{j=2}^{t} \frac{1}{4t} \ < \ \frac{1}{4}. \nonumber\qedhere
  \label{}
  \end{align}
\end{proof}

We prove that there is a blue hedgehog with body $S$, by showing the matching condition of Lemma~\ref{lem:hall} holds.
Consider an arbitrary $F\subset \binom{S}{2}$.
Partition $F = F_{bad}\cup F_{good}$, where $F_{bad} = F\cap E_{bad}$ and $F_{good} = F\cap E_{good}$.
We wish to show that $N\ind{b}(F)\ge |F|+t$.

\textbf{Subcase 1: $|F_{bad}|\ge |F_{good}|$}.

By Lemma~\ref{lem:alg-5},
\begin{align}
  \frac{|F_{bad}|}{\max_{v_iv_j\in F_{bad}}d\ind{b}_{v_iv_j}}
  \ \le \ \sum_{v_iv_j\in F_{bad}}^{} \frac{1}{d\ind{b}_{v_iv_j}}
  \ \le \ \sum_{v_iv_j\in E_{bad}}^{} \frac{1}{d\ind{b}_{v_iv_j}}
  \ < \ \frac{1}{4} .
\label{}
\end{align}
Thus, there exists some $v_iv_j\in F_{bad}$ such that $d\ind{b}_{v_iv_j} > 4|F_{bad}|$.
Furthermore, this $v_iv_j$ satisfies $d\ind{b}_{v_iv_j}\ge 4t$ by Lemma~\ref{lem:alg-4}, so $d\ind{b}_{v_iv_j}\ge 2|F_{bad}|+ 2t$.
Hence, 
\begin{align}
  |N\ind{b}(F)|
  \ \ge \ d\ind{b}_{v_iv_j}
  \ \ge \ 2|F_{bad}| + 2t
  \ \ge \ |F_{bad}| + |F_{good}| + 2t 
  \ > \ |F|+t,
\end{align}
as desired.
The first inequality is because the blue edges containing $v_iv_j$ are all elements of $N\ind{b}(F)$.
The second inequality is because $d\ind{b}_{v_iv_j}$ is at least $4|F_{bad}|$ and at least $4t$ by above.
The third inequality is by the assumption $|F_{bad}|\ge |F_{good}|$.
The fourth inequality is because $|F| = |F_{bad}| + |F_{good}|$ and $2t>t$.

\textbf{Subcase 2: $|F_{bad}| < |F_{good}|$.}

In particular, $|F_{good}| > 0$, so $|F|$ has some good pair $v_iv_j$ with $i<j$.
This pair is in at least $4m_i \ge 8t$ blue triples, so $|N\ind{b}(F)|\ge 8t$.

Let $I$ be the set of all indices $i$ such that there exists $j$ with $i<j\le t$ with $v_iv_j\in F_{good}$.
For each $i$, there are less than $t$ indices $j$ such that $i < j\le t$, so
\begin{align}
  \label{eq:alg-9}
  |I|\cdot t > |F_{good}|.
\end{align}
For each $i\in I$, arbitrarily fix $j_i>i$ such that $v_iv_{j_i}$ is good.
For $i\in I$, define
\begin{align}
  U^*_i \defeq N\ind{b}(\{v_iv_{j_i}\})\cap \hat U\ind{b}(v_i), \qquad
  U^*_I \defeq \bigcup_{i\in I}^{} U^*_i.
\end{align}
so that $U^*_I \subset N\ind{b}(F)$.
For all $i\in I$, the pair $v_iv_{j_i}$ is good, so $v_{j_i}\notin B\ind{b}(v_i)$.
Hence, by the definition of $B\ind{b}(v_i)$, there are more than $4m_i$ vertices $u\in \hat U\ind{b}(v_i)$ such that $\{u,v_i,v_{j_i}\}$ is blue.
Thus, for all $i\in I$, the set $U^*_i$ has at least $4m_i$ vertices.
In the peeling of $v_i$, the penalty $\alpha\ind{b}(u)$ increases by $t/m_i$ for each $u\in U^*_i$.
Hence, in peeling $v_i$, the sum of penalties $\sum_{u\in U^*_I}^{} \alpha\ind{b}(u)$, increases by at least $4m_i\cdot t/m_i = 4t$.
Thus,
\begin{align}
  4t\cdot |I| \ \le \  \sum_{u\in U^*_I}^{} \alpha\ind{b}(u).
\label{eq:alg-10}
\end{align}
On the other hand, the vertex $u$ is deleted from $X$ whenever $\alpha\ind{b}(u)\ge 1/2$, the penalty $\alpha\ind{b}(u)$ increases by at most $t/2t = 1/2$ in any peeling step, and the penalty $\alpha\ind{b}(u)$ never changes after $u$ is deleted from $X$.
Thus, for all vertices $u\in V$, we have
\begin{align}
  \label{eq:alg-11}
  \alpha\ind{b}(u) \le 1.
\end{align}
We conclude
\begin{align}
  2|F|
  \ &\le \  4|F_{good}|
  \ \le \  4t|I| 
  \ \le \  \sum_{u\in U^*_I}^{} \alpha\ind{b}(u)  \nonumber\\
  \ &\le \  \sum_{u\in U^*_I}^{} 1  
  \ = \ \abs{U_I^*} 
  \ \le \ \abs{N\ind{b}(F_{good})} 
  \ \le \ \abs{N\ind{b}(F)}.
\label{}
\end{align}
The first inequality is by the assumption $|F_{bad}|<|F_{good}|$, the second is by \eqref{eq:alg-9}, the third is by \eqref{eq:alg-10}, the fourth is by \eqref{eq:alg-11}, the fifth is by $U_I^*\subset N\ind{b}(F_{good})$, and the sixth is by $F_{good}\subset F$.
Combining with $\abs{N\ind{b}(F)}\ge 8t$, we conclude $\abs{N\ind{b}(F)}\ge |F|+t$, as desired.

This covers all subcases, so we've proven that, for any nonempty subset $F \subset \binom{S}{2}$, we have $N\ind{b}(F)\ge |F|+t$. Hence, the matching condition of Lemma~\ref{lem:hall} holds, so there is a blue hedgehog with body $S$, as desired.
This completes the proof of Theorem~\ref{thm:main}.
\qed

\end{document}